    \newcommand{\culine}[2]{%
        \newcommand\temp@uline{\bgroup\markoverwith
            {\textcolor{#1}{\rule[-0.5ex]{2pt}{0.4pt}}}\ULon}%
        \temp@uline{#2}%
    }
    \newcommand{\cuuline}[2]{%
        \UL@protected\def\temp@uuline{\leavevmode \bgroup
            \UL@setULdepth
            \ifx\UL@on\UL@onin \advance\ULdepth2.8\p@\fi
            \markoverwith{\textcolor{#1}{\lower\ULdepth\hbox
                {\kern-.03em\vbox{\hrule width.2em\kern1\p@\hrule}\kern-.03em}}}%
        \ULon}%
        \temp@uuline{#2}%
    }
    \newcommand{\cuwave}[2]{%
        \UL@protected\def\temp@uwave{\leavevmode \bgroup
        \ifdim \ULdepth=\maxdimen \ULdepth 3.5\p@
        \else \advance\ULdepth2\p@
        \fi \markoverwith{\textcolor{#1}{\lower\ULdepth\hbox{\sixly \char58}}}\ULon}
        \font\sixly=lasy6 
        \temp@uwave{#2}%
    }
    \newcommand{\csout}[2]{%
        \UL@protected\def\temp@sout{\leavevmode \bgroup
        \ifdim \ULdepth=\maxdimen \ULdepth 3.5\p@
        \else \advance\ULdepth2\p@
        \fi \markoverwith{\textcolor{#1}{\lower\ULdepth\hbox{\sixly \char58}}}\ULon}
        \font\sixly=lasy6 
        \temp@sout{#2}%
    }
    \newcommand{\cxout}[2]{%
        \UL@protected\def\temp@uwave{\leavevmode \bgroup
        \ifdim \ULdepth=\maxdimen \ULdepth 3.5\p@
        \else \advance\ULdepth2\p@
        \fi \markoverwith{\textcolor{#1}{\lower\ULdepth\hbox{\sixly \char58}}}\ULon}
        \font\sixly=lasy6 
        \temp@uwave{#2}%
    }
    \newcommand{\cdashuline}[2]{%
        \UL@protected\def\temp@uwave{\leavevmode \bgroup
        \ifdim \ULdepth=\maxdimen \ULdepth 3.5\p@
        \else \advance\ULdepth2\p@
        \fi \markoverwith{\textcolor{#1}{\lower\ULdepth\hbox{\sixly \char58}}}\ULon}
        \font\sixly=lasy6 
        \temp@uwave{#2}%
    }
  \newcommand{\cdotuline}[2]{%
  \UL@protected\def\cdotuline{\leavevmode \bgroup 
    \UL@setULdepth
    \ifx\UL@on\UL@onin \advance\ULdepth2\p@\fi
    \markoverwith{\begingroup
       \lower\ULdepth\hbox{\kern.06em \textcolor{#1}{.}\kern.04em}%
       \endgroup}%
    \ULon}
    }
\def\greendashuline{\leavevmode \bgroup 
    \UL@setULdepth
    \ifx\UL@on\UL@onin \advance\ULdepth2\p@\fi
    \markoverwith{\kern.13em
    \vtop{\color{green}\kern\ULdepth \hrule width .3em}%
    \kern.13em}\ULon}
\theoremstyle{plain}
\newtheorem{theorem}{Theorem}
\newtheorem{lemma}[theorem]{Lemma}
\newtheorem{proposition}[theorem]{Proposition}
\newtheorem{corollary}[theorem]{Corollary}
\numberwithin{theorem}{section}
\numberwithin{equation}{theorem}
\theoremstyle{definition}
\newtheorem{definition}[theorem]{Definition}
\newtheorem{Notation}[theorem]{Notation}
\newtheorem{remark}[theorem]{Remark}
\newtheorem{question}[theorem]{Question}
\newtheorem*{question*}{Question}
\DeclareMathOperator{\Aut}{Aut}
\newcommand\kk{{\Bbbk}}
\def\Aut {\mathop{\rm Aut}\nolimits}
\begin{document}
\title{Aut-stable subspaces of Grassmann algebras}

\author{Mithat Konuralp Demir and Zahra Nazemian }

\address{Konuralp Demir: Middle East Technical University}

\email{konuralp.demir@metu.edu.tr}

\address{Nazemian: University of Graz, Heinrichstrasse 36, 8010 Graz, Austria}

\email{zahra.nazemian@uni-graz.at}


\subjclass[2020]{Primary 15A75, 16W20; Secondary 16S99}

\keywords{Grassmann algebras, $\Aut$-stable subspaces}


\begin{abstract}
Recently, the concept of $\Aut$-stable subspaces has played an important role in the characterization of polynomial rings, a topic that remains a challenging problem in algebraic geometry (see \cite{Kr-1996}). It turns out that polynomial rings with more than two variables do not have any $\Aut$-stable subspaces over an algebraically closed field of characteristic zero \cite{ZJLM}. 
 In this work, we characterize all $\Aut$-stable subspaces and $\Aut$-stable subalgebras of Grassmann  algebras.
\end{abstract}

\maketitle

In his 
Bourbaki seminar, Kraft \cite{Kr-1996} 
considered the following problem as one of 
the eight challenging problems in affine 
algebraic geometry (along with Jacobian 
Conjecture, Automorphism problem, the 
ZCP, etc):

\begin{question}  
\label{xxque0.10}
Find an algebraic-geometric characterization
of $\Bbb{C}[z_1,\cdots,z_m]$.
\end{question}

Recently an answer to this problem is given by the second author together with H. Huang, Y. Wang and J. J. Zhang. They have used the concept of $\Aut$-stable subspaces in this regard. Recall from 
\cite{ZJLM} that if $\kk$ is a field and $A$ is $\kk$-algebra, then 
a $\kk$-subspace $V$ of $A$ is called  
{\sf $\Aut$-stable} if $\sigma(V)\subseteq V$ 
for all algebra automorphisms $\sigma\in 
\Aut_{alg}(A)$. Recall that an algebra automorphism is a ring homomorphism from $A$ to $A$, which is isomorphism and $\kk$-linear. 
Every algebra $A$ has three 
obvious $\Aut$-stable subspaces, namely, $A$, 
$0$, and $\kk$, which are called {\sf trivial} 
$\Aut$-stable subspaces of $A$.

It is shown in \cite{ZJLM} that:

\begin{theorem} \label{lovelytheorem} \cite[Theorem 3.6]{ZJLM}
Suppose $\Bbbk$ is an algebraically closed 
field of characteristic zero and $A$ is a
$\kk$-algebra. Then $A$ is isomorphic to 
$\Bbbk[z_1,\cdots,z_m]$ for some integer $m\geq 2$ 
if and only if the following two conditions hold.
\begin{enumerate}
\item[(1)]
$A\neq \Bbbk$ is affine and connected graded,
\medskip
\item[(2)]
$A$ has no nontrivial $\Aut$-stable subspace.
\end{enumerate}
   
\end{theorem}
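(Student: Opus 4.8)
The plan is to prove the two implications separately, exploiting in both directions the elementary observation that many \emph{naturally defined} subspaces are automatically $\Aut$-stable.

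For the forward implication, suppose $A=\kk[z_1,\dots,z_m]$ with $m\geq 2$ and the standard grading ($\deg z_i=1$), and let $V$ be an $\Aut$-stable subspace. First I would exploit the grading: the degree automorphisms $\rho_t\colon a\mapsto t^{\deg a}a$ (for $t\in\kk^\times$) are algebra automorphisms, so $V$ is $\rho_t$-stable for all $t$ and hence graded, $V=\bigoplus_d (V\cap A_d)$. Next, since $A_d=\mathrm{Sym}^d(A_1)$ is an irreducible $\GL_m$-representation in characteristic zero and $\GL(A_1)\subseteq\Aut(A)$ acts on $A_d$ through this representation, each $V\cap A_d$ is $0$ or all of $A_d$; thus $V$ is a sum of entire graded pieces, and it remains to determine which sets of degrees can occur.

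The crux of this direction is to show that a single positive-degree piece forces all of them. To climb upward I would use the elementary triangular automorphism $\psi_k\colon z_1\mapsto z_1+z_2^{\,k}$ fixing the other variables: applied to $z_1\in A_1$ it gives $z_1+z_2^{\,k}\in V$, and gradedness yields $z_2^{\,k}\in V$, so $A_k\subseteq V$ for every $k$. This is exactly the step requiring $m\geq 2$, and it is the main obstacle in this direction, since for $m=1$ no such degree-raising automorphism exists and indeed $\kk\oplus\kk z$ is a nontrivial $\Aut$-stable subspace. To descend and to absorb the constants I would use the translation $z_1\mapsto z_1+1$: expanding $(z_1+1)^d$ and using gradedness produces all lower powers of $z_1$, hence all $A_k$ with $k\le d$, and shows $A_0=\kk\subseteq V$ as soon as $V$ meets $\fm$. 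Combining these, $V\cap A_d\neq 0$ for some $d\geq 1$ forces $V=A$, while $V\subseteq\kk$ forces $V\in\{0,\kk\}$, so $V$ is trivial.

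For the converse, assume (1) and (2). Here I would extract the algebraic structure of $A$ from a short list of functorial $\Aut$-stable subspaces. The commutator space $[A,A]=\mathrm{span}_\kk\{ab-ba\}$ is $\Aut$-stable and, being spanned by positive-degree homogeneous commutators, lies in $\fm$; hence by (2) it is $0$ and $A$ is commutative. The nilradical $\mathrm{nil}(A)$ is an $\Aut$-stable graded ideal contained in $\fm$, so it too is $0$ and $A$ is reduced. Writing $X=\Spec A$, the singular locus $\mathrm{Sing}(X)$ is preserved by every automorphism, so its homogeneous radical ideal $I$ is an $\Aut$-stable subspace; since $I$ is an ideal, condition (2) forces $I\in\{0,A\}$, and $I=0$ would force $\mathrm{Sing}(X)=X$, impossible for a nonempty reduced variety over an algebraically closed field of characteristic zero. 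Therefore $I=A$, i.e.\ $X$ is smooth, in particular at the cone point $\fm$.

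It then remains to invoke the classical fact that a connected graded commutative $\kk$-algebra whose local ring at the irrelevant maximal ideal $\fm$ is regular must be a polynomial ring: the minimal homogeneous generators of $\fm$ are then algebraically independent, giving $A\cong\kk[z_1,\dots,z_m]$. Finally $m\geq 2$, since $m=0$ is excluded by $A\neq\kk$ and $m=1$ is excluded because $\kk[z]$ has the nontrivial $\Aut$-stable subspace $\kk\oplus\kk z$. I expect the genuinely hard point of the whole argument to be the forward degree-climbing step, and more generally controlling the action of the full, non-graded automorphism group, whereas the converse is comparatively soft once one notices how much algebraic information is already encoded in these canonical $\Aut$-stable subspaces.
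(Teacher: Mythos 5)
A preliminary remark on the comparison itself: this paper does not prove Theorem~\ref{lovelytheorem} at all --- it is imported verbatim from \cite[Theorem~3.6]{ZJLM} as background, so there is no internal proof to measure your attempt against; the closest internal echo is Proposition~\ref{zero}, where the commutator subalgebra is used exactly as you use the commutator span. Judged on its own, your sketch is essentially correct and assembles the standard ingredients in a working order. Forward: scaling automorphisms $z_i\mapsto t z_i$ with $\kk$ infinite force any $\Aut$-stable $V$ to be graded (Vandermonde); irreducibility of $\mathrm{Sym}^d(A_1)$ as a $\GL_m$-representation in characteristic zero makes each $V\cap A_d$ all-or-nothing; the translation $z_1\mapsto z_1+1$ descends from $A_d$ to all $A_k$, $k\le d$, including $\kk$ (here characteristic zero is genuinely used, so the binomial coefficients do not vanish); and the elementary automorphism $z_1\mapsto z_1+z_2^{\,k}$ climbs to all degrees --- correctly identified as the one step needing $m\ge 2$, consistent with the paper's observation that $\kk[x]$ has infinitely many $\Aut$-stable subspaces such as $\kk\oplus\kk z$. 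Converse: you rightly establish commutativity first (the commutator \emph{span} lands in $A_{\ge 2}\subseteq\fm$, so condition (2) kills it) before invoking $\Spec A$; the nilradical is graded, proper, hence in $\fm$, hence zero; and the radical ideal $I$ of $\mathrm{Sing}(\Spec A)$ is a canonical $\Aut$-stable subspace, with $I=\kk$ impossible for an ideal unless $A=\kk$, and $I=0$ excluded by generic smoothness of a reduced affine scheme over a perfect field. Two points deserve to be nailed down rather than waved at: the closing classical fact (connected graded commutative affine with $A_\fm$ regular implies $A$ is a polynomial ring on a minimal homogeneous generating set of $\fm$, via the graded surjection $\kk[Z_1,\dots,Z_m]\to A$ being injective by the dimension count $\dim A=\dim A_\fm=\dim_\kk \fm/\fm^2$), and the gradedness of the nilradical in the $\mathbb{Z}_{\ge 0}$-graded commutative setting; both are true and citable, but as written they carry real weight. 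With those references supplied, your argument stands as a complete and plausibly more geometric route (via the singular locus and generic smoothness) than a purely algebraic treatment; whether it coincides with the argument of \cite{ZJLM} cannot be checked from this manuscript, which only quotes the statement.
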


Moreover, it is shown that although  $ \kk[x]$, the polynomial ring in one variable over $\kk$,   has infinitely many $\Aut$-stable subspaces, it does not have any $\Aut$-stable subalgebras.
Every polynomial ring in $n$ variables is a quotient of the free algebra generated by $n$ variables, subject to the relations that the generators commute.
Understanding invariant ideals of a free algebra $A$---that is, ideals containing their image under any endomorphism of $A$---is a challenging problem in invariant theory; see, for example, \cite{BUL}.

Grassmann algebras are defined in a way similar to polynomial rings, except that the generators satisfy anti-commutativity rather than commutativity.

In more details, if $\kk$ is a field, a  {\sf Grassmann}  $\kk$-algebra generated by $n$ variables, denoted by $\mathcal{E}:= \bigwedge\nolimits_{\kk}(e_1, \dots, e_n)
$,
is the algebra generated by $e_1, \dots, e_n$ with the relations $e_i e_j = -e_j e_i$, for every $1 \leq i, j \leq n$. In particular, $e_i^2 = 0$, and so unlike the commutative polynomial ring, this algebra is never a domain.
The product in a Grassmann algebra is denoted by $\wedge $, as is standard in differential geometry. 

It is known (see also Definition \ref{defgraded} and Remark \ref{defgradedremark}) that both free algebras and Grassmann algebras are connected graded. Therefore, based on Theorem \ref{lovelytheorem}, they admit nontrivial 
$\Aut$-stable subspaces, see also Proposition \ref{zero}. 

Based on the definition of $\Aut$-stable subspaces, understanding them is directly connected to understanding all automorphisms of the algebras. It is valuable to mention that understanding all automorphisms of polynomial algebras with   three  or more variables is also listed among the eight challenging problems in algebraic geometry, see \cite{Kr-1996}.  
However, the authors of \cite{ZJLM} develop some techniques and use certain automorphisms of a polynomial rings
 to show that, when $m \geq 2$, $\kk[z_1, \dots, z_m]$ does  not have any nontrivial $\Aut$-stable subspaces.  
For the Grassmann algebra generated by $n$ variables, when $n$ is finite, the automorphisms are completely understood and studied in the groundbreaking work of D. {\v{Z}}. Djokovi{\'c}, see \cite{russ}.
An attempt to understand the automorphisms of the Grassmann algebra generated by countably infinitely many elements was made in \cite{notfinitecase, 2, 3}. 
A complete characterization has not yet been achieved, as  A. Guimar{\~a}es  mentioned in his recent talk at the European Non-Associative Algebra Seminar in 2025.\\

\emph{From now on, $\kk$ denotes a field of 
characteristic different from $2$, and all 
algebras are assumed to be associative with 
unity. $\mathcal{E}$ denotes the Grassmann 
$\Bbbk$-algebra generated by $n$ variables 
$ E := \{e_1, \cdots, e_n\}$, 
where $n \geq 1$ is finite. We consider the {\sf standard} grading on $\mathcal{E}$, that is, $\mathcal{E}_i$ is the $\kk$-vector space spanned by all products of $i$ distinct elements of $E$.   }

\bigskip

Based on this convention, we prove in this manuscript that:

\begin{theorem}
A nonzero subspace $B \subseteq \mathcal{E}$ is $\Aut$-stable if and only if $B$ has one of the following forms:
\begin{itemize}
    \item[(a)] For some even $0 <  j \leq n   $,  
    \[
   B=  \bigoplus_{\substack{j \leq  k \leq  n \\ k \ \text{even}}} \mathcal{E}_{k}.
    \]
\medskip

    \item[(b)] There exist  $1 \leq j \leq n$, such that  
    \[
    B =  \bigoplus_{\substack{j\leq  i} } \mathcal{E}_{i}.
    \]

\medskip

    \item[(c)] $B = B_1 + B_2$, where $B_1 $ is  of the form in (a) and $B_2 $  is of the form in  (b).  
    
\medskip

    \item[(d)] $B = \kk + B'$, where $B' = 0$ or $B'$ is of the form in (a) ,  (b) or (c).  
\end{itemize}
\end{theorem}

\bigskip
For subalgebras, we obtain the following result:

\begin{theorem}
A nonzero subalgebra $B \subseteq \mathcal{E}$ is $\Aut$-stable if and only if $B = \kk + B'$, where $B'$ is  $\Aut$-stable subspace of $\mathcal{E}$. 
\end{theorem}

\section{Basic Notions and  results}
In this section, $A$ denotes a $\kk$-algebra, where $\kk$ is any field, and $A$ is assumed to be associative with a unit. We recall the following definition.

\begin{definition}\label{defgraded}
A $\kk$-algebra $A$ is called {\sf connected graded} if 
\[
A = \bigoplus_{\substack{i \geq 0 \\ i \in \mathbb{Z}}} A_i,
\]

where $A_i$ are $\kk$-vector subspaces of $A$, $A_0 = \kk$, and $A_i A_j \subseteq A_{i + j}$ for all $i,j \geq 0$.
\end{definition}

\begin{remark}\label{defgradedremark}
If an algebra is connected graded, it may admit different gradings. However, a \emph{standard grading} on $A$, when $A$ is 
a commutative polynomial $\kk$-algebra, an associative free algebra, or a Grassmann algebra  generated by $n$ variables $\{x_1, \dots, x_n\}$—is given by letting 
\[
A_i := \operatorname{span}_{\kk} \left\{ x_{j_1}^{d_1} \cdots x_{j_t}^{d_t} \;\middle|\; 1 \leq j_k \leq n,\ \sum_{i'=1}^t d_{i'} = i \right\}.
\]
We use this grading throughout this work.
\end{remark}

The following result is taken from \cite{ZJLM}; however, we include a proof here for the convenience of the reader. Recall that when $a, b \in A$, the \emph{commutator} of $a$ and $b$ is defined as  
\[
[a, b] := ab - ba.
\]

\begin{proposition} \label{zero} \cite[Example 3.2.]{ZJLM}
Let $A $ be a connected graded noncommutative $\kk$-algebra. Then the subalgebra generated by commutators in $A$, denoted by $A_{\mathrm{com}}$, is a nontrivial $\Aut$-stable subalgebra. 
\end{proposition}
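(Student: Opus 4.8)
The plan is to check two things, since that $A_{\mathrm{com}}$ is a subalgebra is immediate from its definition as the subalgebra generated by the set of all commutators: first that $A_{\mathrm{com}}$ is $\Aut$-stable, and then that it is nontrivial, i.e.\ distinct from $0$, $\kk$ and $A$.

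For $\Aut$-stability I would use only that an algebra automorphism $\sigma$ is a $\kk$-linear ring homomorphism, together with the observation that $\sigma$ carries commutators to commutators:
\[
\sigma([a,b]) = \sigma(ab) - \sigma(ba) = \sigma(a)\sigma(b) - \sigma(b)\sigma(a) = [\sigma(a),\sigma(b)].
\]
Since $A_{\mathrm{com}}$ is generated as a $\kk$-algebra by the set $\{[a,b] : a,b \in A\}$ and $\sigma$ preserves sums and products, $\sigma$ maps this generating set into itself and hence maps the generated subalgebra into itself, giving $\sigma(A_{\mathrm{com}}) \subseteq A_{\mathrm{com}}$ for every $\sigma \in \Aut_{alg}(A)$.

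The substance of the argument is nontriviality, where the connected graded hypothesis is essential. Since $A$ is noncommutative, some commutator is nonzero, so $A_{\mathrm{com}} \neq 0$. To separate $A_{\mathrm{com}}$ from $\kk$ and from $A$, I would pass to the grading. Because $A_0 = \kk$ is central, for homogeneous $a \in A_i$ and $b \in A_j$ the commutator $[a,b]$ vanishes unless $i,j \geq 1$, and in that case $[a,b] \in A_{i+j}$; decomposing arbitrary elements into homogeneous components then shows that every commutator is a sum of homogeneous commutators of strictly positive degree. Consequently $A_{\mathrm{com}}$ is a graded subalgebra. Let $d \geq 1$ be the least positive integer with $A_d \neq 0$, which exists since $A \neq \kk$. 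Every nonzero homogeneous commutator then has degree at least $2d$, and so do all products of such; hence the positive part of $A_{\mathrm{com}}$ is concentrated in degrees $\geq 2d$, whence $A_{\mathrm{com}} \cap A_d = 0$. As $A_d \neq 0$ this yields $A_{\mathrm{com}} \neq A$, while the presence of a nonzero element of degree $\geq 2d \geq 2$ gives $A_{\mathrm{com}} \neq \kk$.

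The main obstacle, and the only place the hypotheses genuinely enter, is this last step: one must know that commutators and their products cannot reach the lowest nonzero graded piece $A_d$. This rests on the centrality of $A_0 = \kk$ forcing commutators into degrees $\geq 2d > d$, and without connected gradedness the equality $A_{\mathrm{com}} = A$ would not be excluded. By contrast, $\Aut$-stability and the inequalities $A_{\mathrm{com}} \neq 0$, $A_{\mathrm{com}} \neq \kk$ are purely formal.
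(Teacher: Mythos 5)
Your proof is correct and follows the same basic strategy as the paper's: formal $\Aut$-stability via $\sigma([a,b])=[\sigma(a),\sigma(b)]$, followed by a grading argument showing that commutators avoid low degrees because $A_0=\kk$ is central. The one genuine difference is in how the grading is exploited, and here your version is actually tighter. The paper's proof simply says ``suppose $A_1\neq 0$'' and concludes $A_{\mathrm{com}}\subseteq \kk\oplus A_2\oplus A_3\oplus\cdots\subsetneq A$, where the strict inclusion relies on $A_1\neq 0$; this hypothesis is not part of the proposition and can fail (a connected graded algebra may well have $A_1=0$ and be generated in higher degrees), so as written the paper's argument does not cover that case. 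You instead take $d\geq 1$ minimal with $A_d\neq 0$ (which exists because a noncommutative algebra cannot equal $\kk$), observe that every commutator lies in degrees $\geq 2d$, and conclude $A_{\mathrm{com}}\cap A_d=0$, hence $A_{\mathrm{com}}\neq A$, while the nonzero commutator in positive degree gives $A_{\mathrm{com}}\neq 0,\kk$. This is the correct general form of the argument and patches the small gap in the published proof at no extra cost. One cosmetic point: when you assert that $A_{\mathrm{com}}$ is graded and that its positive part sits in degrees $\geq 2d$, it is worth noting explicitly that products and $\kk$-linear combinations of elements of $\bigoplus_{i\geq 2d}A_i$ stay in $\kk\oplus\bigoplus_{i\geq 2d}A_i$, which is all you use; gradedness of $A_{\mathrm{com}}$ itself is not needed.
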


\begin{proof}
Let 
\[
G := \{ [a, b]  \mid a, b \in A \}.
\] 
Since $A$ is noncommutative, there exist $a, b \in A$ such that $ab \neq ba$, hence $[a, b] \neq 0 \in G$.  

Let $A_{\mathrm{com}}$ be the algebra generated by $G$, i.e., the intersection of all subalgebras of $A$ containing $G$. Equivalently, $A_{\mathrm{com}}$ consists of all $\kk$-linear combinations of products of elements in $G \cup \{1\}$.  

Clearly, $A_{\mathrm{com}}$ is $\Aut$-stable. Indeed, for every $f \in \Aut_{alg}(A)$, we have 
\[
f([a, b]) = [f(a), f(b)].
\]  

It remains to show that $A_{\mathrm{com}}$ is nontrivial. Take a grading on $A$, say  $A = A_0 \oplus A_1 \oplus A_2 \oplus \dots$ and suppose $A_1 \neq 0$. If 
\[
a = a_0 + a_1 + \cdots + a_n \quad \text{and} \quad b = b_0 + b_1 + \cdots + b_m,
\] 
with $a_i, b_i \in A_i$, then, since $A_0 = \kk$, we have  
\[
ab - ba \in A_2 \oplus A_3 \oplus \cdots.
\] 
Thus 
\[
G \subseteq A_2 \oplus A_3 \oplus \cdots,
\] 
and therefore 
\[
A_{\mathrm{com}} \subseteq \kk \oplus A_2 \oplus A_3 \oplus \cdots \subsetneq A,
\] 
which shows that $A_{\mathrm{com}}$ is indeed nontrivial.
\end{proof}

\bigskip
One method of constructing automorphisms of an algebra is via locally nilpotent derivations.
We recall the following definition.

\begin{definition}
Let \( A \) be a \( \kk \)-algebra.
\begin{enumerate}
    \item A map \( \partial : A \to A \) is {\sf \( \kk \)-linear} if  
    \[
    \partial(a + b) = \partial(a) + \partial(b), \quad \partial(ka) = k\,\partial(a)
    \]
    for all \( a, b \in A \), \( k \in \kk \).
\bigskip 
    \item A \( \kk  \)-linear map is called a {\sf derivation} if for every $a, b \in A$, we have: 
    \[
    \partial(ab) = \partial(a)b + a\partial(b).
    \]
    A derivation is {\sf locally nilpotent} if, for every \( a \in A \), there exists \( n \geq 1 \) such that \( \partial^n(a) = 0 \).
    It is easy to see that for every  $ a \in A$,
    the map
\[
[a, -] : A \to A, \quad x \mapsto ax - xa,
\]
  is a derivation of $A$ which is not in general a locally nilpotent one. 
    \bigskip 
\item  (See \cite{Fr-2006}) Given a locally nilpotent derivation $\partial$ of $A$ and  { $k\in \Bbbk\setminus\{0\}$}, we can define an automorphism of $A$ defined  by
\begin{equation}
\label{E1.11.2}\tag{E1.11.2}
\exp(k\partial)(a):=\sum_{i=0}^{\infty} 
\frac{k^i}{i!} \partial^i(a) 
\qquad {\text{for all $a\in A$}}.
\end{equation}
    
\end{enumerate}
\end{definition}

\begin{Notation}
  \label{not1}
Let $\mathcal{E}$ be a Grassmann algebra generated by 
\[
E := \{ e_1, \dots, e_n \}.
\]
We consider the {\sf standard} grading on $\mathcal{E}$, that is, $\mathcal{E}_i$ is the $\kk$-vector space spanned by all products of $i$ distinct elements of $E$.  

Because of the relations
\[
e_i \wedge e_j = -\, e_j \wedge e_i \quad \text{and} \quad e_i \wedge e_i = 0,
\]
a basis for $\mathcal{E}_i$, denoted by $E_i$,  consists of elements of the form
\[
e_{j_1} \wedge \cdots \wedge e_{j_i}
\quad \text{with} \quad 1 \leq j_1 < j_2 < \cdots < j_i \leq n.
\]
Therefore, for $1 \leq i \leq n$ we have
\[
\mathcal{E}_i = \mathrm{span}_{\kk} \left\{ e_{j_1} \wedge \cdots \wedge e_{j_i} \;\middle|\; 1 \leq j_1 < j_2 < \cdots < j_i \leq n \right\} = \mathrm{span}_{\kk} E_i. 
\]
Clearly,
\[
\mathcal{E}_n = \kk \, e_1 \wedge \cdots \wedge e_n.
\]

For $m \in \mathbb{Q}$, we write $\lfloor m \rfloor$ for the greatest integer less than or equal to $m$, and  
$\lceil m \rceil$ for the least integer greater than or equal to $m$.

The \emph{even part} of $\mathcal{E}$ is defined as  
\[
\mathcal{E}^{\mathrm{ev}} := \bigoplus_{i=0}^{\lfloor n/2 \rfloor} \mathcal{E}_{2i} = \bigoplus_{\substack{0 \leq i \leq n \\ i \ \text{even}}} \mathcal{E}_{i},
\]
and the \emph{odd part} of $\mathcal{E}$ is defined as  
\[
\mathcal{E}^{\mathrm{odd}} := \bigoplus_{i=1}^{\lceil n/2 \rceil} \mathcal{E}_{2i-1} = \bigoplus_{\substack{0 \leq i \leq n \\ i \ \text{odd}}} \mathcal{E}_{i}.
\]

Let $a = \sum_{i=0}^n a_i \in \mathcal{E}$, where $a_i \in \mathcal{E}_i$.  
The {\sf{even part}} of $a$ is defined as
\[
a_{\mathrm{even}} := \sum_{\substack{0 \leq i \leq n \\ i \ \text{even}}} a_i,
\]
and the {\sf {odd part}} of $a$ is defined as
\[
a_{\mathrm{odd}} := \sum_{\substack{0 \leq i \leq n \\ i \ \text{odd}}} a_i.
\]

\end{Notation}

\bigskip

For any $\kk$-algebra $A$, its center, denoted by $Z(A)$, is clearly an $\Aut$-stable subspace, which may be equal to either $\kk$ or $A$. 
\textit{From now on, by $\mathcal{E}$ we mean the Grassmann algebra generated by $
\{ e_1, \dots, e_n \},$
where $n$ is finite, and we use the standard grading as in Notation~\ref{not1}.}\\ 
In what follows, we compute the center of a Grassmann algebra.

\begin{lemma}
Let $\mathcal{E}$ be the Grassmann algebra generated by $n$ elements. Then  
\[
Z(\mathcal{E}) =  \mathcal{E}^{\mathrm{ev}} \; + \; \mathcal{E}_n .
\]
\end{lemma}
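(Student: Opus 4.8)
The plan is to prove the two inclusions separately, relying throughout on the graded anticommutation rule of the exterior algebra: if $a \in \mathcal{E}_p$ and $b \in \mathcal{E}_q$ are homogeneous, then $a \wedge b = (-1)^{pq}\, b \wedge a$. This follows at once from $e_i \wedge e_j = -\,e_j \wedge e_i$ by moving each of the $p$ factors of $a$ past each of the $q$ factors of $b$.

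For the inclusion $\mathcal{E}^{\mathrm{ev}} + \mathcal{E}_n \subseteq Z(\mathcal{E})$, I would first observe that any homogeneous $a \in \mathcal{E}_p$ with $p$ even commutes with every homogeneous $b$, since $(-1)^{pq} = 1$; hence $\mathcal{E}^{\mathrm{ev}} \subseteq Z(\mathcal{E})$ by $\kk$-linearity. Next, for the top piece $\mathcal{E}_n = \kk\, e_1 \wedge \cdots \wedge e_n$, I would note that multiplying $e_1 \wedge \cdots \wedge e_n$ by any $b \in \mathcal{E}_q$ with $q \geq 1$ yields an element of degree $n+q > n$, which is forced to be $0$; thus $e_1 \wedge \cdots \wedge e_n$ annihilates everything of positive degree on both sides and commutes with $\kk$, so it is central. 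This gives the easy inclusion.

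For the reverse inclusion I would write an arbitrary $a \in Z(\mathcal{E})$ as $a = a_{\mathrm{even}} + a_{\mathrm{odd}}$. Since $a_{\mathrm{even}} \in \mathcal{E}^{\mathrm{ev}}$ is already central by the previous paragraph, $a_{\mathrm{odd}} = a - a_{\mathrm{even}}$ is central as well, so it suffices to show that a central odd element lies in $\mathcal{E}_n$. Writing $c := a_{\mathrm{odd}} = \sum_{i\ \mathrm{odd}} c_i$ with $c_i \in \mathcal{E}_i$, the anticommutation rule gives $e_j \wedge c_i = (-1)^i c_i \wedge e_j = -\, c_i \wedge e_j$ for each odd $i$, whence $[c, e_j] = c \wedge e_j - e_j \wedge c = 2\, c \wedge e_j$. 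As $\operatorname{char}\kk \neq 2$, centrality of $c$ is therefore equivalent to $c \wedge e_j = 0$ for every $j = 1, \dots, n$.

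The main work is then to deduce from $c \wedge e_j = 0$ (all $j$) that every homogeneous component $c_i$ with $i < n$ vanishes; I expect this linear-independence step to be the only genuinely substantive point. Since the summands $c_i \wedge e_j$ lie in distinct graded pieces $\mathcal{E}_{i+1}$, the condition separates degreewise into $c_i \wedge e_j = 0$ for each odd $i$ and each $j$. Expanding $c_i = \sum_{|I|=i} \lambda_I\, e_I$ in the basis $E_i$ of strictly increasing monomials, I would fix $j$ and note that $e_I \wedge e_j = 0$ when $j \in I$, while for $j \notin I$ the products $e_I \wedge e_j = \pm\, e_{I \cup \{j\}}$ are linearly independent for distinct $I$. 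Hence $c_i \wedge e_j = 0$ forces $\lambda_I = 0$ for every $I$ not containing $j$; ranging over all $j$ shows that $\lambda_I$ can be nonzero only if $I = \{1, \dots, n\}$, which is impossible when $i < n$. Thus $c_i = 0$ for all odd $i < n$, leaving $c \in \mathcal{E}_n$, and the two inclusions together yield $Z(\mathcal{E}) = \mathcal{E}^{\mathrm{ev}} + \mathcal{E}_n$.
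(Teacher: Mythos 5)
Your proof is correct and follows essentially the same route as the paper: reduce to a central odd element $c$, observe that $[c, e_j] = 2\, c \wedge e_j$ so centrality forces $c \wedge e_j = 0$ for all $j$ (using $\operatorname{char}\kk \neq 2$), and conclude that all odd components of degree less than $n$ vanish. The only difference is that you explicitly justify, via the basis expansion and the linear independence of the monomials $e_{I \cup \{j\}}$, the step the paper merely asserts (that a nonzero homogeneous element of degree $< n$ satisfies $a \wedge e_k \neq 0$ for some $k$), so your write-up is if anything slightly more complete.
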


\begin{proof}
Clearly, every element of $\mathcal{E}_{2i}$, when $ 0 \leq  i \leq  \lfloor n/2 \rfloor $ lies in the center. Moreover, if $T \in \mathcal{E}_n$, then $T \wedge a = 0 = a \wedge T$ for every $a \in \bigoplus _{i = 1} ^n \mathcal{E}_i $ and    $T \wedge a =  a \wedge T$, when $a \in \kk$. 
This shows that 
\[
\bigoplus_{i=0}^{\lfloor n/2 \rfloor} \mathcal{E}_{2i}  \; + \; \mathcal{E}_n \subseteq Z(\mathcal{E}).
\]

Conversely, let $x \in Z(\mathcal{E})$. Write
\[
x = x_1 + x_2,
\]
where $x_1 \in \bigoplus_{i=0}^{\lfloor n/2 \rfloor} \mathcal{E}_{2i} + \mathcal{E}_n$ and   
$x_2 \in   \bigoplus_{i=0}^{\lfloor n/2 \rfloor - 1} \mathcal{E}_{2i + 1}$.

Since $x, x_1 \in Z(\mathcal{E})$, it follows that $x_2 \in Z(\mathcal{E})$.  
We can write
\[
x_2 = \sum_{i=0}^{\lfloor n/2 \rfloor - 1} a_{2i+1}, \quad a_{2i+1} \in \mathcal{E}_{2i+1}.
\]
We claim that $x_2 = 0$. Suppose not. Then there exists  $ 0\leq j \leq \lfloor n/2 \rfloor - 1 $ such that $a_{2j +1} \neq 0$. Since ${2j +1} < n$, there exists $e_k \in E$ such that $a_{2j + 1} \wedge e_k \neq 0$.  

For every
$ 0\leq i \leq \lfloor n/2 \rfloor - 1 $, since $2i + 1$ is odd, 
 we have either $a_{2i+1} \wedge e_k = 0 = e_k \wedge a_{2i+1}$ or $a_{2i+1} \wedge e_k = -\, e_k \wedge a_{2i+1}$.  
Therefore, 
\[
 0 \neq x_2 \wedge e_k = -\, e_k \wedge x_2,
\]
which contradicts the fact that $x_2 \in Z(\mathcal{E})$.  
Hence $x_2 = 0$, and the claim follows.
\end{proof}

\bigskip

Now we compute 
$\mathcal{E}_{\mathrm{com}}$,  defined in Proposition~\ref{zero}:

\begin{lemma} \label{com}
Let $\mathcal{E}$ be the Grassmann algebra generated by $n$ elements. Then  
\[
   \mathcal{E}_{\mathrm{com}} = \mathcal{E}^{\mathrm{ev}}. 
\]
\end{lemma}

\begin{proof}
Since $\mathcal{E}_{\mathrm{com}}$ is a subalgebra and contains $\kk$, to prove that 
$\mathcal{E}^{\mathrm{ev}} \subseteq \mathcal{E}_{\mathrm{com}}$, it is enough to show that for every nonzero even integer $m$,  the basis of 
$\mathcal{E}_m$, namely $E_m$, are contained in $\mathcal{E}_{\mathrm{com}}$.  
Let 
\[
    e_{i_1} \wedge \cdots \wedge e_{i_m} \in {E}_m,
\]
where $m \geq 2$ is even. Then  
\[
    e_{i_1} \wedge \cdots \wedge e_{i_m} = 
    \tfrac{1}{2} \big[\, e_{i_1} \wedge \cdots \wedge e_{i_{m -1}},\ e_{i_m} \,\big].
\]
Recalling the definition of $G$ as the set containing all commutators in the proof of Lemma~\ref{zero}, we have 
\[
   \big[\, e_{i_1} \wedge \cdots \wedge e_{i_{m -1}},\ e_{i_m} \,\big] \in G,
\]
and consequently $ e_{i_1} \wedge \cdots \wedge e_{i_m} \in \mathcal{E}_{\mathrm{com}}$.  
Therefore $\mathcal{E}^{\mathrm{ev}} \subseteq \mathcal{E}_{\mathrm{com}}$.  

To prove the reverse inclusion, note that since $\mathcal{E}_{\mathrm{com}}$ is a subalgebra, it suffices to show that the elements of $G$ are in $\mathcal{E}^{\mathrm{ev}}$.  
Take $a, b \in \mathcal{E}$. We can write  
\[
    a = \sum_{i = 0}^n a_i, \qquad b = \sum_{i = 0}^n b_i,
\]
where $a_i, b_i \in \mathcal{E}_i$. We then have $a_i \wedge b_j = b_j \wedge a_i$ whenever $i$ or $j$ is even.  
Therefore, 
\[
    [a_i, b_j] = 0 \quad \text{whenever $i$ or $j$ is even}.
\]
Moreover, 
\[
    [a_i , b_j] \in \mathcal{E}^{\mathrm{ev}} \quad \text{whenever both $i$ and $j$ are odd}.
\]
This implies that $[a, b]$, which is equal to $ \sum_{0 \leq i, j \leq n} [a_i, a_j]$ is an element of $ \mathcal{E}^{\mathrm{ev}}$, and hence  
\[
   \mathcal{E}^{\mathrm{ev}} = \mathcal{E}_{\mathrm{com}}. 
\]
\end{proof}

As a consequence of the above two lemmas, when $n$ is even we have  $
   \mathcal{E}_{\mathrm{com}} = Z(\mathcal{E}).
$

\section{$\Aut$-stable subspaces
(subalgebras)
of $\mathcal{E}$
}
In this section, we characterize 
$\Aut$-stable subspaces and subalgebras of the Grassmann algebra $\mathcal{E}$. 
To achieve this goal, we first recall some properties of its automorphism group, $ \Aut_{{alg}}(\mathcal{E})$, 
which have been characterized and are well understood in \cite{russ}. 

A (not necessarily commutative) ring is called {\sf local} if the set of all non-invertible elements 
forms an ideal, or equivalently, if there exists a unique maximal right (equivalently, left) ideal.  

If $a \in M := \bigoplus_{i \geq 1} \mathcal{E}_i$,  
then $a^{\,n+1} = 0$, which means that $a$ is nilpotent. 
Since the sum of a nilpotent element and an invertible element is invertible, 
$\mathcal{E}$ is a local ring with maximal ideal $M$. 

\bigskip

\begin{lemma}[\cite{russ}, Lemma~7] \label{esyfact}
If $a, b \in \mathcal{E}^{\mathrm{odd}}$, then 
\[
   [a, [b, -]] = 0.
\]
In particular, $[a, [a, -]] = 0$. 
\end{lemma}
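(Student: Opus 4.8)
The plan is to reduce everything to the $\ZZ/2\ZZ$-grading of $\mathcal{E}$ together with the single observation that the inner derivation $[b,-]$ maps the whole algebra into its even part. First I would record the sign rule for homogeneous elements: if $u \in \mathcal{E}_i$ and $v \in \mathcal{E}_j$, then $u \wedge v = (-1)^{ij}\, v \wedge u$, which follows by commuting each generator past the others using $e_s \wedge e_t = -\,e_t \wedge e_s$. Two consequences are immediate and will be the only inputs I need. Since $(-1)^{ij} = 1$ whenever $i$ is even, every even-degree element is central, so $\mathcal{E}^{\mathrm{ev}} \subseteq Z(\mathcal{E})$, consistent with the lemma computing $Z(\mathcal{E})$ above; and a homogeneous odd element anticommutes with every homogeneous odd element.

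Next I would show that $[b, x] \in \mathcal{E}^{\mathrm{ev}}$ for every $x \in \mathcal{E}$. Using bilinearity of the commutator in its second argument, it suffices to treat the even and odd parts of $x$ separately. Write $x = x_{\mathrm{even}} + x_{\mathrm{odd}}$. Because $b$ is odd and $\mathcal{E}^{\mathrm{ev}}$ is central, $[b, x_{\mathrm{even}}] = 0$. For the odd part, the sign rule applied to each homogeneous summand gives $b \wedge x_{\mathrm{odd}} = -\, x_{\mathrm{odd}} \wedge b$, so that
\[
[b, x_{\mathrm{odd}}] = b \wedge x_{\mathrm{odd}} - x_{\mathrm{odd}} \wedge b = 2\, b \wedge x_{\mathrm{odd}}.
\]
Since both $b$ and $x_{\mathrm{odd}}$ are odd, every homogeneous summand of $b \wedge x_{\mathrm{odd}}$ has even degree, whence $[b,x] = 2\, b \wedge x_{\mathrm{odd}} \in \mathcal{E}^{\mathrm{ev}}$.

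Finally I would combine the two steps: for an arbitrary $x \in \mathcal{E}$ the element $[b,x]$ lies in $\mathcal{E}^{\mathrm{ev}} \subseteq Z(\mathcal{E})$, hence commutes with $a$, giving $[a,[b,x]] = 0$. As $x$ was arbitrary, the operator $[a,[b,-]]$ vanishes identically, and setting $a = b$ yields the special case $[a,[a,-]] = 0$.

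The main obstacle here is really only a bookkeeping point rather than a genuine difficulty: one must keep track of the sign $(-1)^{ij}$ carefully enough to confirm that odd$\cdot$odd products land in even degree, and invoke bilinearity of the commutator so that verifying the even/odd splitting on homogeneous components is enough. Once the centrality of $\mathcal{E}^{\mathrm{ev}}$ is in hand, the conclusion is essentially immediate.
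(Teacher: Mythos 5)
Your proof is correct, and there is nothing in the paper to compare it against: the paper states this lemma without proof, citing it as Lemma~7 of Djokovi{\'c} \cite{russ}, and your argument --- the sign rule $u \wedge v = (-1)^{ij}\, v \wedge u$ for $u \in \mathcal{E}_i$, $v \in \mathcal{E}_j$, hence centrality of $\mathcal{E}^{\mathrm{ev}}$, hence $[b,x] = [b,x_{\mathrm{odd}}] = 2\, b \wedge x_{\mathrm{odd}} \in \mathcal{E}^{\mathrm{ev}} \subseteq Z(\mathcal{E})$ and so $[a,[b,x]] = 0$ --- is exactly the standard reasoning one would expect in the source. All the steps check out, including the bilinearity reduction to homogeneous components, and the argument does not even need $\operatorname{char} \kk \neq 2$, since the factor $2$ plays no role in the conclusion that $[b,x]$ is central.
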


\bigskip

When $Id_{\mathcal{E}}$ denotes the identity map on ${\mathcal{E}}$, Lemma~\ref{esyfact} together with Equation~\eqref{E1.11.2} implies that if
$a \in \mathcal{E}^{\mathrm{odd}}$, then 
\[
   \exp([a, -]) = Id_{\mathcal{E}} + [a, -],
\]
which maps $x \in \mathcal{E}$ to $x + [a, x]$  defines an automorphism of $\mathcal{E}$.  

Consider 
\[
   N_1 := \{\, Id_{\mathcal{E}} + [a, -] \;\mid\; a \in \mathcal{E}^{\mathrm{odd}} \,\},
\]
which is an abelian subgroup of $\Aut_{\mathrm{alg}}(\mathcal{E})$. Indeed, by Lemma~\ref{esyfact}, for every $a, b \in \mathcal{E}^{\mathrm{odd}}$ we have
\[
   (Id_{\mathcal{E}} + [a, -])(Id_{\mathcal{E}} + [b, -]) 
   = Id_{\mathcal{E}} + [a, -] + [b, -] 
   = (Id_{\mathcal{E}} + [b, -])(Id_{\mathcal{E}} + [a, -]).
\]

Now define 
\[
   F_0 := \{\, f \in \Aut_{\mathrm{alg}}(\mathcal{E}) \;\mid\; f(\mathcal{E}^{\mathrm{ev}}) \subseteq \mathcal{E}^{\mathrm{ev}},\ 
   f(\mathcal{E}^{\mathrm{odd}}) \subseteq \mathcal{E}^{\mathrm{odd}} \,\}.
\]

If $V := \kk e_1 + \cdots + \kk e_n$ is the vector space spanned by the generators $e_1, \dots, e_n$, and $f$ is a $\kk$-linear map from $V$ to $V$, then $f$ extends uniquely to an automorphism of $\mathcal{E}$, which necessarily belongs to $F_0$.  
Moreover, since $Id_{\mathcal{E}} = Id_{\mathcal{E}} + [ 0, - ]$,   we have  that $N_1 \cap F_0 = \{ Id_{\mathcal{E}} \}$. Using the fact that $\mathcal{E}$ is local, D.~{\v{Z}}.~Djokovi{\'c} shows in \cite{russ} that:

\bigskip

\begin{theorem} [\cite{russ}, Theorem~9] \label{es1}
Let $\mathcal{E}$ be the Grassmann algebra generated by $n$ elements. Then
\[
   \Aut_{\mathrm{alg}}(\mathcal{E}) = N_1 \rtimes F_0.
\]
\end{theorem}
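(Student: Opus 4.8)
The plan is to verify the three conditions that characterise an internal semidirect product: that $N_1$ and $F_0$ are subgroups of $\Aut_{\mathrm{alg}}(\mathcal{E})$ with $N_1\cap F_0=\{Id_{\mathcal{E}}\}$, that $N_1$ is normal, and that $N_1F_0=\Aut_{\mathrm{alg}}(\mathcal{E})$. The first condition is already recorded above ($N_1$ is an abelian subgroup, $F_0$ is visibly closed under composition and inverses, and $N_1\cap F_0=\{Id_{\mathcal{E}}\}$), so the work lies in the normality of $N_1$ and in the surjectivity $N_1F_0=\Aut_{\mathrm{alg}}(\mathcal{E})$. Throughout I use the elementary description of the bracket coming from Lemma~\ref{esyfact}: for odd $a$ and homogeneous $x$, one has $[a,x]=0$ when $x$ is even and $[a,x]=2\,a\wedge x$ when $x$ is odd; consequently each $\nu=Id_{\mathcal{E}}+[a,-]\in N_1$ fixes $\mathcal{E}^{\mathrm{ev}}$ pointwise and satisfies $\nu(e_i)=e_i+2\,a\wedge e_i$.

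For normality, take $g\in\Aut_{\mathrm{alg}}(\mathcal{E})$ and $\nu=Id_{\mathcal{E}}+[a,-]\in N_1$. Since $g$ is an algebra automorphism, $g([a,g^{-1}(x)])=[g(a),x]$, so $g\nu g^{-1}=Id_{\mathcal{E}}+[g(a),-]$. Writing $g(a)=g(a)_{\mathrm{even}}+g(a)_{\mathrm{odd}}$ and recalling $\mathcal{E}^{\mathrm{ev}}\subseteq Z(\mathcal{E})$, the even summand is central and contributes nothing to the bracket, whence $g\nu g^{-1}=Id_{\mathcal{E}}+[\,g(a)_{\mathrm{odd}},-]\in N_1$. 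Thus $N_1\trianglelefteq\Aut_{\mathrm{alg}}(\mathcal{E})$.

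For surjectivity, fix $f\in\Aut_{\mathrm{alg}}(\mathcal{E})$. First remove its linear part: let $g_1\in F_0$ be the linear automorphism extending the map induced on $\mathcal{E}_1\cong M/M^2$, so that $h:=g_1^{-1}f$ acts as the identity on $M/M^2$. Because $N_1$ is normal, it suffices to factor $h$ as $\nu g'$ with $\nu\in N_1$ and $g'\in F_0$; then $f=g_1\nu g'=(g_1\nu g_1^{-1})(g_1g')$ exhibits $f\in N_1F_0$. To factor $h$, I successively annihilate the even part of the $h(e_i)$ by elements of $N_1$, working up the finite filtration $M^2\supseteq M^3\supseteq\cdots$ (finite since $M^{\,n+1}=0$). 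Suppose the lowest even-degree components of the defects occur in degree $2m$, say $c_i\in\mathcal{E}_{2m}$. Comparing the degree-$(2m+1)$ parts of the relation $h(e_i)\wedge h(e_j)=-\,h(e_j)\wedge h(e_i)$, and using that each $c_i$ is central, yields the compatibility $c_i\wedge e_j+c_j\wedge e_i=0$ for all $i,j$. If one can produce $a\in\mathcal{E}_{2m-1}$ with $c_i=2\,a\wedge e_i$ for every $i$, then $\nu_0:=Id_{\mathcal{E}}+[a,-]\in N_1$ has $\nu_0^{-1}h$ with its degree-$2m$ even defects removed and none created in degree $\leq 2m$ (since $[a,-]$ raises degree by $2m-1$ and alters only even parts), so the lowest even-defect degree strictly increases. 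Iterating and multiplying the resulting $\nu_0$'s, we reach some $g'$ all of whose generators have become odd; since an automorphism sending generators to odd elements preserves parity, $g'\in F_0$, completing the factorisation.

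The crux, and the step I expect to be hardest, is the purely exterior-algebraic solvability statement just invoked: \emph{if $c_1,\dots,c_n\in\mathcal{E}_{2m}$ satisfy $c_i\wedge e_j+c_j\wedge e_i=0$ for all $i,j$, then $c_i=2\,a\wedge e_i$ for some $a\in\mathcal{E}_{2m-1}$.} The implication ``of this form $\Rightarrow$ compatible'' is immediate from $e_i\wedge e_j+e_j\wedge e_i=0$; the converse is a Koszul-type exactness assertion, saying that in this degree the only relations among the left multiplications $e_i\wedge(-)$ are the evident ones. I would prove it either by a dimension count against the Koszul differential or by explicit induction on the index set, reconstructing the coefficients of $a$ from those of the $c_i$; here the fact that $\mathcal{E}^{\mathrm{ev}}\subseteq Z(\mathcal{E})$ (so signs behave symmetrically) is exactly what makes the compatibility condition coincide with the image of multiplication. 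Once this lemma is in hand, the inductive factorisation gives $N_1F_0=\Aut_{\mathrm{alg}}(\mathcal{E})$, and together with the normality of $N_1$ and $N_1\cap F_0=\{Id_{\mathcal{E}}\}$ we conclude $\Aut_{\mathrm{alg}}(\mathcal{E})=N_1\rtimes F_0$.
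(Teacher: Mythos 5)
The paper does not actually prove this statement: it is imported verbatim from Djokovi\'c \cite{russ}, with only the preparatory facts (Lemma~\ref{esyfact}, the subgroup structure of $N_1$, $N_1\cap F_0=\{Id_{\mathcal{E}}\}$, and the locality of $\mathcal{E}$) recorded in the text. So there is no in-paper proof to match; judged on its own merits, your proposal is an essentially correct self-contained reconstruction, in the spirit of Djokovi\'c's original filtration argument. The normality computation is right: $g\nu g^{-1}=Id_{\mathcal{E}}+[g(a),-]$, and since $\mathcal{E}^{\mathrm{ev}}\subseteq Z(\mathcal{E})$ only $g(a)_{\mathrm{odd}}$ contributes to the bracket, so $g\nu g^{-1}\in N_1$. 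The factorisation loop is also sound: automorphisms preserve $M$ (the paper notes $f(e_s)\in\bigoplus_{t\geq 1}\mathcal{E}_t$), so after correcting by the induced map on $M/M^2$ --- which you should note is bijective, so that its linear extension really is an automorphism in $F_0$ --- you may assume $h(e_i)=e_i+\text{higher terms}$; each corrector $\nu_0^{-1}=Id_{\mathcal{E}}-[a,-]$ alters only the even parts of the $h(e_i)$ and strictly raises the lowest even-defect degree, $M^{\,n+1}=0$ forces termination, and an automorphism sending every generator into $\mathcal{E}^{\mathrm{odd}}$ preserves the $\mathbb{Z}_2$-grading (which is multiplicative), hence lies in $F_0$. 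Your degree-$(2m+1)$ extraction of the compatibility condition $c_i\wedge e_j+c_j\wedge e_i=0$ from $h(e_i)\wedge h(e_j)=-h(e_j)\wedge h(e_i)$ also checks out, using centrality of the even components.

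The one step you left unproved --- the crux lemma --- is true, and your coefficient-reconstruction plan closes it in a few lines. Taking $j=i$ gives $c_i\wedge e_i=0$ (using $\operatorname{char}\kk\neq 2$), so $c_i=b_i\wedge e_i$ with $b_i\in\mathcal{E}_{2m-1}$ uniquely chosen so that no monomial of $b_i$ contains $e_i$. For $i\neq j$ the hypothesis then reads $(b_i-b_j)\wedge e_i\wedge e_j=0$, i.e.\ $b_i$ and $b_j$ have equal coefficients on every basis monomial $e_{j_1}\wedge\cdots\wedge e_{j_{2m-1}}$ avoiding both $e_i$ and $e_j$. Hence one may define $2a$ monomial-by-monomial: assign to each such monomial the common coefficient it carries in any $b_i$ with $i$ outside its support (such an $i$ exists unless $2m-1=n$, in which case $c_i\in\mathcal{E}_{n+1}=0$ and there is nothing to prove). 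By construction $(2a-b_i)\wedge e_i=0$, so $c_i=2a\wedge e_i$ for all $i$, as required. With this inserted, your argument is a complete proof of the theorem, which the paper itself delegates entirely to \cite{russ}.
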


\begin{definition}\label{basicdef}
\begin{itemize}
    \item  [(i)] 
Let $0 \neq k_0 \in \kk$, and let $x = e_{i_1} \wedge \cdots \wedge e_{i_m} \in E_m$, where $m \geq 1$,  be an 
element in the basis  of $\mathcal{E}_m$.  
Then $k_0x$ is called an {\sf irreducible element}.  

The {\sf support} of $k_0x$, denoted $\sup(k_0x)$, is the set
\[
   \sup(k_0x) := \{ e_{i_1}, \ldots, e_{i_m} \}.
\]

By an \textsf{irreducible decomposition} of $0 \neq b \in \mathcal{E}_m$, where $m \geq 1$, we mean a representation
\[
b = b_1 + \cdots + b_k
\]
such that  each $b_i$ is an irreducible element  and for every $1 \leq j \neq t \leq k$, we have $\sup(b_j) \neq \sup(b_t)$.  
Clearly, this decomposition is unique and we write
\[
\operatorname{Irr}(b) = k,      {} {} {} \sup (b) := \bigcup _{i = 1}^k \sup (b_i) 
\]

We define $\operatorname{Irr}(0) = 0$ and $\operatorname{Irr}(k_0) = 1$.   
More generally, for $a = \sum_{i=0}^n a_i$ with $a_i \in \mathcal{E}_i$, we set
\[
\operatorname{Irr}(a) := \sum_{i=0}^n \operatorname{Irr}(a_i),  {}{}{} \sup (a):= \bigcup _{i = 1}^n \sup (a_i).
\]

    \item [(ii)] 
For $1 \leq i \neq j \leq n$, we denote by $\phi_{i,j}$ the automorphism of $\mathcal{E}$ that interchanges 
$e_i$ and $e_j$, while fixing all other basis elements of $E$. Another automorphism that we will use is denoted by $\phi_S$, 
where $S \subseteq E$. It is defined on the basis elements by
\[
   \phi_S(e_i) = 
   \begin{cases}
      -e_i, & \text{if } e_i \in S, \\[6pt]
       e_i, & \text{if } e_i \notin S.
   \end{cases}
\]
\end{itemize}

\end{definition}

\bigskip
Clearly, we have:

\begin{lemma}
Let $x = e_{i_1} \wedge \cdots \wedge e_{i_m}$ be an irreducible element and let $e_i, e_j \in \sup(x)$ with $i \neq j$.  
Then 
\[
   \phi_{i,j}(x) = -x.
\]
\end{lemma}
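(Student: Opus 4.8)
The plan is to reduce the claim to the single defining relation $e_a \wedge e_b = -\, e_b \wedge e_a$ of the generators, using that $\phi_{i,j}$ is a $\kk$-algebra automorphism and hence multiplicative with respect to $\wedge$.

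First, I would record that, since $e_i, e_j \in \sup(x)$, both generators occur among the factors of $x = e_{i_1} \wedge \cdots \wedge e_{i_m}$; write $e_i = e_{i_p}$ and $e_j = e_{i_q}$ and assume without loss of generality that $p < q$. Because $\phi_{i,j}$ is an algebra homomorphism, it expands across the product,
\[
\phi_{i,j}(x) = \phi_{i,j}(e_{i_1}) \wedge \cdots \wedge \phi_{i,j}(e_{i_m}).
\]
By the definition of $\phi_{i,j}$, every factor $e_{i_r}$ with $r \neq p, q$ is fixed, while the factor $e_i$ in position $p$ becomes $e_j$ and the factor $e_j$ in position $q$ becomes $e_i$. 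Thus $\phi_{i,j}(x)$ is exactly the product $x$ with the entries in positions $p$ and $q$ interchanged.

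Next, I would compare this transposed product with $x$ itself using only anti-commutativity. Interchanging the two factors is realized by moving the entry now sitting in position $p$ rightward past the $q-p$ intermediate factors to position $q$, and then moving the entry now in position $q$ leftward past the remaining $q-p-1$ factors back to position $p$; this uses $2(q-p)-1$ adjacent swaps, each contributing one sign $-1$. Since $2(q-p)-1$ is odd, the total sign is $-1$, giving $\phi_{i,j}(x) = -x$.

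There is no genuine obstacle here, as the argument rests entirely on anti-commutativity; the only point deserving care is the sign count, namely that the interchange of the factors in positions $p$ and $q$ is an odd permutation and so contributes exactly one factor of $-1$. Equivalently, one may invoke the standard fact that permuting the factors of a wedge of distinct generators multiplies it by the sign of the permutation: here $\phi_{i,j}$ induces the transposition of $e_i$ and $e_j$ on the factors of $x$, which has sign $-1$.
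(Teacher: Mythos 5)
Your proof is correct, and it is precisely the argument the paper intends: the lemma is stated there without any proof (introduced only by ``Clearly, we have:''), the implicit content being exactly your observation that $\phi_{i,j}$ acts on the factors of $x$ by a transposition of two distinct generators, which anti-commutativity converts into the single sign $-1$ via an odd number, $2(q-p)-1$, of adjacent swaps. The only cosmetic slip is that after moving the factor from position $p$ rightward, the factor originally at position $q$ sits at position $q-1$ rather than $q$, but this does not affect the count or its parity, and your closing remark (the sign of the induced permutation is the sign of a transposition) already gives the cleanest form of the argument.
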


\bigskip

{
We now begin to study $\Aut$-stable subspaces of the Grassmann algebra.  

\begin{proposition} \label{3.5}
Let $B$ be an $\Aut$-stable subspace of $\mathcal{E}$. 
Then the following hold:
\begin{itemize}
    \item[(i)] If $ \mathcal{E}_i  \cap B \neq 0$, then $\mathcal{E}_i \subseteq B$.  
    \medskip
    \item[(ii)] Let 
      $a = \sum_{i = 0}^{n} a_i \in B$, where $ a_i \in \mathcal{E}_i$. If  
    $a_j \neq 0$, then $\mathcal{E}_j \subseteq B$.  
     \medskip
     \item[(iii)] 
     Let 
      $a = \sum_{i = 0}^{n} a_i \in B$, where $ a_i \in \mathcal{E}_i$. Then 
     $\sum_{i = 0}^{n} [b, a_i] \in B,  \quad \text{for all } b \in \mathcal{E}^{\mathrm{odd}}$.  
   
\end{itemize}
\end{proposition}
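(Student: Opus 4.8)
The plan is to drive everything from the explicit automorphisms recorded just before the statement, with the genuinely new ingredient being a projection onto ``support-homogeneous'' pieces built from the sign-change maps $\phi_S$. I would begin with part~(iii), which is essentially immediate: for any $b\in\mathcal{E}^{\mathrm{odd}}$, Lemma~\ref{esyfact} and Equation~\eqref{E1.11.2} show that $f:=Id_{\mathcal{E}}+[b,-]$ is an algebra automorphism. Since $B$ is $\Aut$-stable and $a\in B$, we get $f(a)=a+[b,a]\in B$, and subtracting $a\in B$ leaves $[b,a]\in B$. Expanding $[b,a]=\sum_{i=0}^{n}[b,a_i]$ by bilinearity of the commutator gives the claim. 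This also sets the template for the rest: read off membership in $B$ by applying a concrete automorphism and using that $B$ is a $\kk$-subspace.

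The engine for parts~(i) and~(ii) is a projection lemma that I would prove first: for every $a\in B$ and every subset $U\subseteq E$, the component of $a$ supported exactly on $U$ (that is, the scalar multiple of the single basis monomial $e_U:=\bigwedge_{k\in U}e_k$ occurring in $a$) again lies in $B$. The idea is that the maps $\{\phi_S\mid S\subseteq E\}$ form a group isomorphic to $(\mathbb{Z}/2)^n$, since $\phi_S\phi_{S'}=\phi_{S\triangle S'}$, and each acts on the monomial $e_U$ by the scalar $(-1)^{|S\cap U|}$. Distinct supports yield distinct characters of this group, so the basis monomials are precisely its one-dimensional weight spaces. Averaging then produces the projection $\pi_U:=\frac{1}{2^n}\sum_{S\subseteq E}(-1)^{|S\cap U|}\,\phi_S$ onto $\kk\,e_U$; because $B$ is a $\kk$-subspace stable under every $\phi_S$, we conclude $\pi_U(a)\in B$. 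I expect this averaging to be the main obstacle to keep honest: it is valid exactly because $2^n$ (equivalently $2$) is invertible in $\kk$, so the standing hypothesis $\operatorname{char}\kk\neq2$ is doing real work here, and this is what makes the argument independent of the size of $\kk$.

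With the projection lemma in hand, part~(i) follows by a transitivity argument. Given $0\neq x\in\mathcal{E}_i\cap B$, the element $x$ is a $\kk$-combination of degree-$i$ monomials, so some $\pi_{U_0}(x)$ is a nonzero scalar multiple of a monomial $e_{U_0}$ with $|U_0|=i$; scaling gives $e_{U_0}\in B$. Any other $i$-subset $U'$ is the image of $U_0$ under a permutation of $\{1,\dots,n\}$, which is a product of the transposition automorphisms $\phi_{k,l}$, so applying the corresponding composite sends $e_{U_0}$ to $\pm e_{U'}\in B$, whence $e_{U'}\in B$. Since these monomials span $\mathcal{E}_i$, we obtain $\mathcal{E}_i\subseteq B$. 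Finally, part~(ii) combines the two: applying the projection lemma to $a=\sum_i a_i$ and summing the in-$B$ pieces $\pi_U(a)$ over all $U$ with $|U|=j$ recovers $a_j$, so $a_j\in B$; as $a_j\neq0$ this gives $\mathcal{E}_j\cap B\neq0$, and part~(i) upgrades this to $\mathcal{E}_j\subseteq B$.
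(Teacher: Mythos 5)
Your proposal is correct, but for parts (i) and (ii) it takes a genuinely different route from the paper. The paper never forms your averaged projection $\pi_U=\frac{1}{2^n}\sum_{S\subseteq E}(-1)^{|S\cap U|}\phi_S$; it works greedily with one sign flip at a time: for (i) it inducts on the number $m$ of irreducible components of $a\in\mathcal{E}_i\cap B$, choosing a single generator $e_j$ lying in the support of one component but not of another, so that $a+\phi_{\{e_j\}}(a)\in B$ is nonzero with at most $m-1$ components; for (ii) it runs an induction on $j$ combined with a minimization argument over the counts $\operatorname{Irr}(b)-\operatorname{Irr}(b_j)$ for suitable $b\in B$, again improved step by step via single flips $\phi_{\{e_k\}}$. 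Your character-theoretic lemma is cleaner and strictly stronger: identifying the monomials $e_U$ as the one-dimensional weight spaces of the group $\{\phi_S \mid S\subseteq E\}\cong(\mathbb{Z}/2)^n$ shows in one stroke that every monomial coefficient (hence every homogeneous component) of an element of $B$ again lies in $B$, which subsumes both of the paper's inductions and makes your deduction of (ii) — namely $a_j=\sum_{|U|=j}\pi_U(a)\in B$, then apply (i) — immediate, where the paper's bookkeeping is comparatively delicate (e.g.\ securing $e_k\in\sup\bigl(\sum_{i>j}b'_i\bigr)\setminus\sup(c_1)$). You are also right to flag the division by $2^n$ as the real cost, but it buys no loss of generality relative to the paper: the paper's flip trick needs $a+\phi_{\{e_j\}}(a)=2\cdot(\text{the part of }a\text{ avoiding }e_j)$ to be nonzero, so both arguments consume exactly the standing hypothesis $\operatorname{char}\kk\neq 2$, the paper's version merely avoiding any division beyond by $2$. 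Your transitivity argument for (i) via products of the $\phi_{k,l}$ agrees with the paper's base case $m=1$, and your part (iii) coincides with the paper's proof verbatim in substance.
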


\begin{proof}
(i) Since $\mathcal{E}_n$ is one-dimensional,  if  
$0 \neq \mathcal{E}_n \cap B$, then $\mathcal{E}_n \subseteq B$. Moreover, it is clear that if $ \kk \cap B \neq 0$, then 
$ \kk \subseteq  B$. 
Suppose that  $ 0 <i < n$, 
and  $ 0 \neq a \in \mathcal{E}_i \cap B$. 
We  write the {irreducible decomposition} of $a$
\[
a = a_{i_1} + \cdots + a_{i_m}.
\]
We proceed by induction on $m$.

If $m = 1$, then $B$ contains an element of ${E}_i$, and by applying a composition of automorphisms of the form  
$\phi_{j,j'}$, $ 1 \leq j \neq j' \leq n$,  we see that $B$ contains all basis elements of ${E}_i$, and consequently $\mathcal{E}_i \subseteq B$.  

Now suppose $m > 1$. Then there exists $e_j \in \sup(a_{i_m}) \setminus \sup(a_{i_{m-1}})$.  
Consider the automorphism $\phi_{\{e_j\}}$ introduced in Definition~\ref{basicdef}. Then 
\[
\phi_{\{e_j\}}(a_{i_m}) = -a_{i_m}, \quad \text{and} \quad \phi_{\{e_j\}}(a_{i_{m-1}}) = a_{i_{m-1}}.
\]  
This implies that $\phi_{\{e_j\}}(a) + a$, which is a nonzero element of $B$, is a sum of at most $m-1$ irreducible elements.  
By the induction hypothesis, we conclude that $\mathcal{E}_i \subseteq B$.

\medskip
(ii) Let 
\[
a = \sum_{i=0}^{n} a_i \in B, \qquad a_i \in \mathcal{E}_i, \quad a_j \neq 0.
\]
The proof is by induction on $j$.

\medskip
\noindent
\emph{Base case $j=0$.}  
Define
\[
S_0 := \left\{\, b = \sum_{i=0}^n b_i \in B \;\middle|\; b_i \in \mathcal{E}_i,\, b_0 \neq 0 \,\right\}.
\]
This set is nonempty, since $a \in S_0$.  
Choose $b' \in S_0$ such that $\operatorname{Irr}(b')$ is 
minimum in the set 
\[
\{\, \operatorname{Irr}(b) \mid b \in S_0 \,\}.
\]
We claim that $\operatorname{Irr}(b') = 1$.  
Suppose not, so that $\operatorname{Irr}(b') > 1$.  
Let $e_i \in \sup(b')$. Then 
\[
b' + \phi_{\{e_i\}}(b') \in S_0
\quad\text{and}\quad 
\operatorname{Irr}\!\left(b' + \phi_{\{e_i\}}(b')\right) < \operatorname{Irr}(b'),
\]
which contradicts the minimality of $\operatorname{Irr}(b')$.  
Therefore $\operatorname{Irr}(b') = 1$, which means that $b' \in \Bbbk$ is nonzero.  
Hence $\Bbbk = \mathcal{E}_0 \subseteq B$.

\medskip
\noindent
\emph{Induction step $j>0$.}  
By the induction hypothesis, if $\mathcal{E}$ contains an element whose $i$-th component is nonzero for some $i<j$, then $\mathcal{E}_i \subseteq \mathcal{E}$.  
This implies that 
\[
\sum_{i=j}^n a_i \in B,
\]
so the set
\[
S_j := \left\{\, b = \sum_{i=j}^n b_i \in B \;\middle|\; b_i \in \mathcal{E}_i,  b_j \neq 0 \,\right\}
\]
is nonempty.  

Choose 
\[
b' = \sum_{i=j}^n b'_i \in S_j
\]
such that 
\[
\operatorname{Irr}(b') - \operatorname{Irr}(b'_j)
\]
is minimal among all 
\[ \{
\operatorname{Irr}(b) - \operatorname{Irr}(b_j), \qquad b = \sum_{i=j}^n b_i \in S_j.\}
\]

We claim that
\[
\operatorname{Irr}(b') - \operatorname{Irr}(b'_j) = 0,
\]
which is equivalent to 
\[
\sum_{i=j+1}^n b'_i = 0.
\]
Suppose instead that 
\[
\operatorname{Irr}(b') - \operatorname{Irr}(b'_j) > 0.
\]
Then let $ b'_j = c_1 + \cdots + c_m $ be the irreducible decomposition of $b'_j$. There exists 
\[
e_k \in \sup\!\Bigl(\sum_{i=j+1}^n b'_i\Bigr) \setminus 
\sup ( c_1 )
\]
  
Set 
\[
b'' := b' + \phi_{\{e_k\}}(b') \in S_j.
\]
Then 
\[
\operatorname{Irr}(b'') - \operatorname{Irr}(b''_j) 
< \operatorname{Irr}(b') - \operatorname{Irr}(b'_j),
\]
 which is a contradiction. 

Therefore our claim holds, and hence $0 \neq b'_j  \in \mathcal{E}_j \cap B$.  
By part (i), this implies that 
$
\mathcal{E}_j \subseteq B.$
  
\medskip
    (iii) This holds because, as explained before Theorem~\ref{es1}, for every 
$b \in \mathcal{E}^{\mathrm{odd}}$ the map 
\[
\phi := Id_{\mathcal{E}} + [b, -]
\]
lies in $N_1 \subseteq \Aut_{\mathrm{alg}}(\mathcal{E})$. 
Consequently, for every 
$a = \sum_{i = 0}^{n} a_i \in B$, where $ a_i \in \mathcal{E}_i$, we have  
\[
\phi(a) - a = \sum_{i=0}^{n} [b, a_i] \in B.
\]

\end{proof}
}

As a consequence of Proposition \ref{3.5}, we obtain: 

\begin{corollary} \label{cor2.6}
Let $a = \sum_{i = 0}^{n} a_i \in B$, where $B$ is an $\Aut$-stable subspace of $\mathcal{E}$, and suppose that $a_j \neq 0$ for some $1 \leq j \leq n$. Then:
\begin{itemize}
    \item[(i)] If $j < n$ is odd, then $\mathcal{E}_{j+1} \subseteq B$. 
    \medskip
    \item[(ii)] If $ 0 < j < n-1$ is even, then $\mathcal{E}_{j+2k} \subseteq B$ for every $k \geq 0$. 
\end{itemize}
\end{corollary}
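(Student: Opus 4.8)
The plan is to reduce everything to Proposition~\ref{3.5}: its parts (i) and (ii) already give that $a_j\neq 0$ forces $\mathcal{E}_j\subseteq B$, so the entire content of the corollary is to \emph{raise} the degree—from $\mathcal{E}_j\subseteq B$ deduce the higher strata claimed in (i) and (ii). The two parts demand different tools because of parity: the commutator automorphisms of $N_1$ see only the odd part, whereas even-degree elements are central.

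For part (i), where $j$ is odd, I would exploit Proposition~\ref{3.5}(iii). Pick a basis vector $x=e_{i_1}\wedge\cdots\wedge e_{i_j}\in\mathcal{E}_j\subseteq B$; since $j<n$ there is a generator $e_k\notin\sup(x)$. As $x$ is homogeneous and $e_k\in\mathcal{E}^{\mathrm{odd}}$, Proposition~\ref{3.5}(iii) places $[e_k,x]$ in $B$. Using $x\wedge e_k=(-1)^{j}\,e_k\wedge x=-e_k\wedge x$ for odd $j$, this commutator equals $2\,e_k\wedge x$, a nonzero element of $\mathcal{E}_{j+1}$ because $\operatorname{char}\kk\neq 2$ and $e_k\notin\sup(x)$. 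Hence $\mathcal{E}_{j+1}\cap B\neq 0$, and Proposition~\ref{3.5}(i) upgrades this to $\mathcal{E}_{j+1}\subseteq B$.

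For part (ii), where $j$ is even, the commutator route collapses: every even element is central (this is exactly the computation $[a_i,b_j]=0$ whenever $i$ or $j$ is even from the proof of Lemma~\ref{com}), so elements of $N_1$ fix $\mathcal{E}_j$ pointwise and produce nothing. I would instead introduce a genuinely degree-raising automorphism and induct on $k$, the base case $\mathcal{E}_j\subseteq B$ being given. Assuming $\mathcal{E}_m\subseteq B$ for an even $m$ with $m+2\le n$, define $\sigma\colon\mathcal{E}\to\mathcal{E}$ on generators by $\sigma(e_1)=e_1+e_1\wedge e_{m+1}\wedge e_{m+2}$ and $\sigma(e_i)=e_i$ for $i\neq 1$. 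Applying $\sigma$ to $x=e_1\wedge\cdots\wedge e_m\in\mathcal{E}_m\subseteq B$ gives $\sigma(x)=x+e_1\wedge e_{m+1}\wedge e_{m+2}\wedge e_2\wedge\cdots\wedge e_m$, whose degree-$(m+2)$ component is a nonzero basis vector of $\mathcal{E}_{m+2}$ (all $m+2$ indices are distinct and $\le n$). Thus $\sigma(x)\in B$ has nonzero $(m+2)$-component, and Proposition~\ref{3.5}(ii) yields $\mathcal{E}_{m+2}\subseteq B$. Iterating this while $m+2\le n$ reaches every $\mathcal{E}_{j+2k}$ with $j+2k\le n$, the remaining strata being zero; the hypothesis $j<n-1$ is precisely what makes the first step available.

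The hard part will be verifying that $\sigma$ is a genuine automorphism, which is the crux of part (ii). I would check that $\sigma$ respects the defining relations—$\sigma(e_1)^2=0$ and $\sigma(e_1)\wedge e_i=-e_i\wedge\sigma(e_1)$ for $i\neq 1$—using that the added term $e_1\wedge e_{m+1}\wedge e_{m+2}$ is odd and already contains $e_1$, so squaring it or re-wedging with $e_1$ annihilates the new contributions, while odd–odd anticommutation handles the sign. For invertibility I would observe that $\sigma-\mathrm{Id}_{\mathcal{E}}$ strictly raises degree on homogeneous elements (each basis vector $z$ satisfies $\sigma(z)-z\in\bigoplus_{t\ge \deg z+2}\mathcal{E}_t$), hence is nilpotent on the finite-dimensional $\mathcal{E}$, so $\sigma^{-1}=\sum_{t\ge 0}(\mathrm{Id}_{\mathcal{E}}-\sigma)^{t}$ is a finite sum; equivalently $\sigma$ lies in $F_0$ and induces the identity on the associated graded. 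Everything else is routine bookkeeping with supports together with Proposition~\ref{3.5}.
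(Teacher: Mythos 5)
Your proposal is correct and takes essentially the same route as the paper: part (i) is the paper's commutator argument via Proposition~\ref{3.5}(iii) (you apply it to a homogeneous basis vector of $\mathcal{E}_j$ rather than to $a$ itself, which makes the nonvanishing of the degree-$(j+1)$ term transparent), and part (ii) uses exactly the paper's degree-raising automorphism $e_1 \mapsto e_1 + e_1 \wedge e_{j+1} \wedge e_{j+2}$ with the same iteration. The only divergence is bookkeeping: you establish invertibility via nilpotence of $\sigma - \mathrm{Id}_{\mathcal{E}}$, whereas the paper simply exhibits the explicit inverse $e_1 \mapsto e_1 - e_1 \wedge e_{j+1} \wedge e_{j+2}$.
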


\begin{proof}
\begin{itemize}
  \item[(i)] By Part~(ii) of Proposition~\ref{3.5}, we know that $\mathcal{E}_j \subseteq B$. Since $j < n$, there exists some $e_k$ and an irreducible component of $a_j$ such that $e_k$ is not in its support. By Part~(iii) of Proposition~\ref{3.5}, 
  \[
   a' := [e_k , a] \in B.
  \]
  Since the $(j+1)$-st component of $a'$ is nonzero, Part~(ii) of Proposition~\ref{3.5} implies that $\mathcal{E}_{j+1} \subseteq B$.
\medskip

  \item[(ii)] Suppose that $j < n - 1$ is even. From Part~(ii) of Proposition~\ref{3.5}, we have $\mathcal{E}_j \subseteq B$, and hence 
  \[
  e_{1} \wedge \cdots \wedge e_j \in B.
  \]
  Consider the $\kk$-linear ring homomorphism $f$ on $\mathcal{E}$ defined by
  \[
  f(e_1) = e_1 + e_1 \wedge e_{j+1} \wedge e_{j+2}, \qquad 
  f(e_i) = e_i \ \text{ for } i \neq 1.
  \]
  Note that $f$ is well defined. Indeed, for every $1 \leq i', i'' \leq n$, we have
\[
f(e_{i'} \wedge e_{i''}) 
= - f( e_{i''} \wedge e_{i'}).
\]
Moreover, $f$ is an automorphism, since its inverse is given by
\[
f^{-1}(e_1) = e_1 - e_1 \wedge e_{j+1} \wedge e_{j+2}, 
\qquad
f^{-1}(e_i) = e_i \quad \text{for } i \neq 1.
\]
  Consequently,
  \[
  0 \neq f(e_{1} \wedge \cdots \wedge e_j) - e_{1} \wedge \cdots \wedge e_j 
  \in B \cap \mathcal{E}_{j+2}.
  \]
  Hence $\mathcal{E}_{j+2} \subseteq B$. Repeating this process shows that $\mathcal{E}_{j+2k} \subseteq B$ for all $k \geq 0$.
\end{itemize}
\end{proof}

\bigskip

We are now in a position to give a characterization of the $\Aut$-stable subspaces of $\mathcal{E}$.

\begin{theorem} 
A nonzero subspace $B \subseteq \mathcal{E}$ is $\Aut$-stable if and only if $B$ has one of the following forms:
\begin{itemize}
    \item[(a)] For some even $0 <  j \leq n   $,  
    \[
   B=  \bigoplus_{\substack{j \leq  k \leq  n \\ k \ \text{even}}} \mathcal{E}_{k}.
    \]
\medskip

    \item[(b)] There exist  $1 \leq j \leq n$, such that  
    \[
    B =  \bigoplus_{\substack{j\leq  i} } \mathcal{E}_{i}.
    \]

\medskip

    \item[(c)] $B = B_1 + B_2$, where $B_1 $ is  of the form in (a) and $B_2 $  is of the form in  (b).  
    
\medskip

    \item[(d)] $B = \kk + B'$, where $B' = 0$ or $B'$ is of the form in (a) ,  (b) or (c).  
\end{itemize}
\end{theorem}

\begin{proof}
($\implies$)  
We only need to  check that the subspaces of type (a) and (b) are indeed $\Aut$-stable.  
Let $f \in \Aut_{alg}(\mathcal{E})$. Note that $e_s \wedge e_s = 0$, for every $1 \leq  s \leq n$,  and since $f$ is a ring homomorphism, we have
\[
0 = f(0) = f(e_s) \wedge f(e_s).
\]
This implies that
\[
f(e_s) \in \bigoplus_{t=1}^{n} \mathcal{E}_t.
\]
Hence, 
\[
f(\mathcal{E}_s) \subseteq \bigoplus_{t=s}^{n} \mathcal{E}_t.
\]
Now, by Theorem~\ref{es1}, it follows that every subspace of type (a) or (b) is $\Aut$-stable.

\smallskip
($\impliedby$)  
Conversely, let $B$ be a nontrivial $\Aut$-stable subspace of $\mathcal{E}$.  
Define
\[
S := \{\, 0 < i \leq n \mid \mathcal{E}_i \subseteq B \,\} = S_1 \cup S_2,
\]
where $S_1$ is  a subset of  odd integers and $S_2$ of  even integers. Let $B$ does not contain $\kk$. 
If $S_1$ is empty, then by Corollary~\ref{cor2.6}(ii), $B$ is of type~(a).
Suppose that $S_1$ is nonempty and let $j = \min S_1$.  
Then, by Corollary~\ref{cor2.6}(i), we have $\mathcal{E}_{j+1} \subseteq B$.  
Also, when $n \ge 3$, the map that sends $e_1$ to 
$e_1 + e_1 \wedge e_2 \wedge e_3$ and sends every other $e_i$ (for $i \neq 1$) to $e_i$ shows that every odd element greater than $j$ lies in $S_1$, and consequently every even element greater than $j$ lies in $S_2$.  
Hence, $B$ is of type~(b) or sum of an element in form (a) and another one in form (b).  
Now, if $B$ contains $\kk$, then we have $B = \kk + B'$, where $B'$ is an Aut-stable subspace, and thus $B$ is of type~(d).

\end{proof}

\bigskip

 As a corollary, we obtain a complete description of the $\Aut$-stable  subalgebras of $\mathcal{E}$ as follows:

\begin{corollary}\label{prefin}
A nonzero subspace $B \subseteq \mathcal{E}$ is $\Aut$-stable if and only if $B = \kk + B'$, where $B'$ is one the followings: 
\begin{itemize}
    \item[(a)] For some even $0 <  j \leq n   $,  
    \[
   B'=  \bigoplus_{\substack{j \leq  k \leq  n \\ k \ \text{even}}} \mathcal{E}_{k}.
    \]
\medskip

    \item[(b)] There exist  $1 \leq j \leq n$, such that  
    \[
    B' =  \bigoplus_{\substack{j\leq  i} } \mathcal{E}_{i}.
    \]

\medskip

    \item[(c)] $B' = B_1 + B_2$, where $B_1 $ is  of the form in (a) and $B_2 $  is of the form in  (b).  
\end{itemize}
\end{corollary}

\bigskip

Automorphisms of Grassmann algebras with infinitely many variables have been studied by several researchers (see, for example, \cite{notfinitecase, 2, 3}). However, a complete characterization has not yet been achieved. In this context, we ask:

\begin{question}
    What are the $\Aut$-stable subspaces (subalgebras) of Grassmann algebras generated by countably infinitely many elements?
\end{question}

\subsection*{Acknowledgments.} 
Nazemian was supported by the Austrian Science Fund (FWF) under grant P 36742. She would like to thank Jian James Zhang for introducing her to the concept of invariant theory during joint work \cite{ZJLM}, and Vesselin Drensky
for his valuable comments on free algebras during a conference in Graz and also  Alan de Ara{\'u}jo Guimar{\~a}es for introducing some references. This manuscript was written during the DRP Türkiye 2025 program, and the authors would like to thank its organizers.

\bibliography{main}
\bibliographystyle{amsplain}

 \end{document}